\numberwithin{equation}{section}
\begin{document}
\title{A lemma for microlocal  sheaf theory in the $\infty$-categorical setting}

\author{Marco Robalo and Pierre Schapira}
\maketitle

\begin{abstract}
Microlocal  sheaf theory of~\cite{KS90} makes an essential use of an extension lemma for sheaves due to Kashiwara, and this lemma is based on a criterion of the same author giving conditions in order that a functor defined in $\R^\rop$ with values in the category $\Sets$ of sets be constant. 

In a first part of this paper,  using classical tools, we show how to generalize the extension lemma to the case of the unbounded derived category. 

In a second part, we extend Kashiwara's  result on constant functors by replacing the category $\Sets$ with  the 
$\infty$-category of spaces and apply it to generalize the extension lemma to $\infty$-sheaves, the $\infty$-categorical version of sheaves. 

Finally, we define the micro-support of sheaves with values in a stable $(\infty,1)$-category.
\end{abstract}


\section{Introduction}
Microlocal sheaf theory appeared in~\cite{KS82} and was developed in \cite{KS85,KS90}. However, this theory is constructed in the framework of the bounded (or bounded from below) derived category of sheaves $\Derb(\cor_M)$ on a real manifold $M$, for a commutative unital ring $\cor$, and it appears necessary in various problems to extend the theory to the unbounded derived category of sheaves $\RD(\cor_M)$. See in particular~\cite{Ta08, Ta15}. 

A crucial result in this theory is~\cite{KS90}*{Prop.~2.7.2}, that we call here the ``extension lemma''. This lemma, which first appeared in~\cite{Ka75,Ka83}),  asserts that if one has an increasing family 
of open subsets $\{U_s\}_{s\in\R}$ of a topological Hausdorff space $M$  and an object $F$ of  $\Derb(\cor_M)$ such that the cohomology of $F$  on  $U_s$ extends through the boundary of $U_s$ for all $s$, then $\rsect(U_s;F)$ is constant with respect to $s$. A basic tool for proving this result is the ``constant functor criterion'', again due to Kashiwara, a result which gives a condition in order that a functor $X\cl\R^\rop\to\Sets$  is constant, where $\Sets$ is the category of sets in a given universe.

In~\S~\ref{sect:unbounded} we generalize the extension lemma to the unbounded setting, that is, to objects of $\RD(\cor_M)$. 
Our proof is  rather elementary and is based on the tools of~\cite{KS90}. This generalization being achieved, the reader can persuade 
himself that most of the 
results, such as the functorial behavior of the micro-support, of~\cite{KS90} extend to the unbounded case.

Next, we consider an higher categorical generalization of this result. In~\S~\ref{sect:constfct} we generalize the constant functor criterion to the case where the $1$-category $\Sets$ is replaced with the $\infty$-category $\Spaces$ of spaces. Using this new tool, in~\S~\ref{section:deformationlemmastable}, we generalize the extension lemma for $\infty$-sheaves with values in any stable compactly generated $\infty$-category $\shd$. When $\shd$ is the $\infty$-category $\mdinf[\cor_M]$ of $\infty$-sheaves of unbounded complexes of $\cor$-modules we recover the results of~\S~\ref{sect:unbounded}.

Finally, in~\S~\ref{section:microsupportstable} we define the micro-support of any $\infty$-sheaf $F$ with general stable higher coefficient.

\begin{remark} 
After this paper has been written, David Treumann informed us of the result of  Dmitri Pavlov~\cite{Pa16}
who generalizes Kashiwara's ``constant functor criterion'' to the case where the functor takes values in the $\infty$-category of spectra. Note that Corollary \ref{cor:126} below implies Pavlov's result on spectra.
\end{remark}

\section{Unbounded derived category of sheaves}\label{sect:unbounded}
Let $\Sets$ denote the category of sets, in a given universe $\shu$. In the sequel, we consider $\mathbb{R}$ as a category with the morphisms being given by the natural order $\leq$.

We first recall a result due to M.~Kashiwara (see~\cite{KS90}*{\S~1.12}).

\begin{lemma}[{The constant functor criterion}]\label{le:1260}
Consider a functor $X\cl \R^\rop\to\Sets$. 
Assume that for each $s\in\R$  
\eq 
&&\indlim[t>s]X_t\isoto X_s\isoto\prolim[r<s]X_r.\label{eq:1260}
\eneq
Then the functor $X$ is constant. 
\end{lemma}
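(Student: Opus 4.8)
The plan is to prove that for every pair $a\le b$ the transition map $\rho_{b,a}\colon X_b\to X_a$ (the image under $X$ of the unique morphism $b\to a$ of $\R^\rop$) is a bijection. Granting this, constancy is a formality: $\R^\rop$ is a connected category, so fixing a base point $s_0$ and transporting $X_{s_0}$ along the now-invertible $\rho$'s identifies $X$ with the constant functor at $X_{s_0}$, naturality amounting to the cocycle identity $\rho_{b,a}=\rho_{c,a}\circ\rho_{b,c}$ for $a\le c\le b$. I would get bijectivity in two steps — first injectivity of all $\rho_{b,a}$, then, \emph{using} injectivity, surjectivity — each being a ``connectedness on a compact interval'' argument driven by the two halves of~\eqref{eq:1260}: the left limit $X_s\isoto\prolim_{r<s}X_r$ is what pushes a property \emph{down to} a supremum, and the right colimit $\indlim_{t>s}X_t\isoto X_s$ is what pushes it \emph{strictly past} that supremum.

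For injectivity I would fix $a\le b$, take $x,x'\in X_b$ with $\rho_{b,a}(x)=\rho_{b,a}(x')$, and consider $S=\{c\in[a,b]:\rho_{b,c}(x)=\rho_{b,c}(x')\}$. The relation $\rho_{b,c'}=\rho_{c,c'}\circ\rho_{b,c}$ makes $S$ downward closed in $[a,b]$, hence an interval; if $x\neq x'$ then $a\in S$, $b\notin S$, so $d:=\sup S\in[a,b)$. If $d\notin S$, then $S=[a,d)$, so $\rho_{b,r}(x)=\rho_{b,r}(x')$ for all $r<d$, and applying $X_d\isoto\prolim_{r<d}X_r$ (a limit, so an element of it is pinned down by its projections $\rho_{d,r}$) gives $\rho_{b,d}(x)=\rho_{b,d}(x')$, i.e. $d\in S$ — contradiction. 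Hence $d\in S$, and then $\rho_{b,d}(x)=\rho_{b,d}(x')$ in $X_d\isoto\indlim_{t>d}X_t$; since this is a \emph{filtered} colimit, two elements that agree there already agree at a finite stage, producing $t\in(d,b]$ with $\rho_{b,t}(x)=\rho_{b,t}(x')$, so $t\in S$ with $t>\sup S$ — contradiction. Therefore $x=x'$.

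For surjectivity I would fix $a\le b$, $\xi\in X_a$, and set $T=\{c\in[a,b]:\xi\in\operatorname{Im}\rho_{c,a}\}$, again an interval containing $a$; put $d:=\sup T$. If $d\notin T$ then $T=[a,d)$, and for each $r\in[a,d)$ there is, by the injectivity just proved, a \emph{unique} $\xi_r\in X_r$ with $\rho_{r,a}(\xi_r)=\xi$; uniqueness forces $\rho_{r',r}(\xi_{r'})=\xi_r$, so $(\xi_r)_{a\le r<d}$ — extended to $r<a$ by $\xi_r:=\rho_{a,r}(\xi)$ — is a compatible family, hence an element of $\prolim_{r<d}X_r\cong X_d$ lifting $\xi$; thus $d\in T$, a contradiction. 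So $d\in T$: write $\xi=\rho_{d,a}(\eta)$. If moreover $d<b$, then $\eta\in X_d\isoto\indlim_{t>d}X_t$ is represented by some $\eta_t\in X_t$ with $t>d$ (pushing $\eta_t$ forward we may assume $t\le b$), whence $\xi=\rho_{t,a}(\eta_t)$ with $t\in T$, $t>\sup T$ — contradiction. Hence $d=b\in T$, i.e. $\xi\in\operatorname{Im}\rho_{b,a}$.

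The routine ingredients — an element of a filtered colimit of sets comes from a single stage and two such become equal at a finite stage, while a cone over a cofiltered diagram is the same datum as an element of its limit — I would simply invoke. The only genuine subtlety I anticipate is the bookkeeping around the suprema $d$: ensuring each half of~\eqref{eq:1260} is applied on the correct side, and, in the surjectivity step, noticing that injectivity is really needed so that the fibres $\rho_{r,a}^{-1}(\xi)$ are singletons and the partial lifts cohere \emph{without any choice} into an honest element of the limit $X_d$. Degenerate cases ($X_s$ empty, or $a=d$) are harmless and are absorbed by the same formulas.
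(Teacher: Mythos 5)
Your argument is correct. Note that the paper itself gives no proof of Lemma~\ref{le:1260}: it is recalled from \cite{KS90}*{\S~1.12} (Prop.~1.12.6), and your proof is essentially that classical argument — a continuous-induction/supremum argument on the interval $[a,b]$, proving first injectivity and then (using injectivity to make the partial lifts cohere without choices) surjectivity of every transition map, with the limit half of \eqref{eq:1260} pushing the property up to the supremum and the filtered-colimit half pushing it strictly past it; the reduction of constancy to invertibility of all $\rho_{b,a}$ is also fine. The only nitpick is cosmetic: in the injectivity step $d=\sup S$ need not lie in $[a,b)$ (one could have $S=[a,b)$), but your case ``$d\notin S$'' does not use $d<b$ and still yields the contradiction, so the argument as written covers this case.
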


Let $\cor$ denotes a unital ring and denote by $\md[\cor]$ the abelian Grothendieck category of $\cor$-modules.  Set for short 
\eqn
\RC(\cor)&\eqdot&\RC(\md[\cor]),\mbox{ the category of chains complexes of $\md[\cor]$},\\
\quad\RD(\cor)&\eqdot&\RD(\md[\cor])\mbox{ the (unbounded) derived category of $\md[\cor]$}.
\eneqn
We look at the ordered set $(\R,\leq)$ as a category and 
consider a functor  $X\cl\R^\rop\to \RC(\cor)$. We write for short $X_s=X(s)$.

The next result is a variant on Lemma~\ref{le:1260} and the results of~\cite{KS90}*{\S~1.12}.
\begin{lemma}\label{pro:constantfct}\label{le:126a}
Assume that
\eq
&&\mbox{ for any $k\in\Z$, any $r\leq s$ in $\R$, the map $X^k_s\to X^k_r$ is surjective,}\label{eq:126a}\\
&&\mbox{ for any $k\in\Z$, any $s\in\R$, $X^k_s\isoto\prolim[r<s]X^k_r$,}\label{eq:126b}\\
&&\mbox{ for any $j\in\Z$, any $s\in\R$,  $\indlim[t>s] H^j(X_t)\isoto H^j(X_s)$.}\label{eq:126c}
\eneq
Then for any $j\in\Z$, $r,s\in\R$  with $r\leq s$,  one has the isomorphism $H^j(X_t)\isoto H^j(X_s)$.
In other words, for all $j\in\Z$, the functor $H^j(X)$ is constant. 
\end{lemma}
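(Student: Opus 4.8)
The plan is to deduce the statement from the constant functor criterion, Lemma~\ref{le:1260}, applied, for each $j\in\Z$, to the functor $G^j\cl\R^\rop\to\Sets$ given by $G^j_s=H^j(X_s)$ (forgetting the $\cor$-module structure). For this we must check, for each $s\in\R$, the two isomorphisms $\indlim[t>s]G^j_t\isoto G^j_s$ and $G^j_s\isoto\prolim[r<s]G^j_r$ required in~\eqref{eq:1260}. The first is precisely hypothesis~\eqref{eq:126c}, so all the content is in the second one, that is, in the isomorphism $H^j(X_s)\isoto\prolim[r<s]H^j(X_r)$.

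To study its right-hand side, fix $s\in\R$ together with a strictly increasing sequence $r_0<r_1<\cdots$ with $r_n\nearrow s$; it is cofinal in $\{r\in\R:r<s\}$, so the projective limits below $s$ may be computed along the tower $X_{r_0}\gets X_{r_1}\gets\cdots$. By~\eqref{eq:126a} the transition maps of this tower are surjective in every degree --- so that the ordinary projective limit computes the derived one --- and by~\eqref{eq:126b} this termwise projective limit is $X_s$ itself. Hence the Milnor exact sequence gives, for each $j$, the short exact sequence
\[
0\to R^{1}\prolim[n]H^{j-1}(X_{r_n})\to H^j(X_s)\to\prolim[n]H^j(X_{r_n})\to0 .
\]
In particular the map $H^j(X_s)\to\prolim[r<s]H^j(X_r)$ is already surjective, and it is an isomorphism exactly when the term $R^{1}\prolim[n]H^{j-1}(X_{r_n})$ vanishes.

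The vanishing of this term is the heart of the matter, and it is here that hypothesis~\eqref{eq:126c} has to intervene a second time: conditions~\eqref{eq:126a}--\eqref{eq:126b} do not force it by themselves (already the tower of two-term complexes $[\,\Z\twoheadrightarrow\Z/p^{n}\,]$ has surjective transition maps but its $H^{-1}$-tower is $\Z\xleftarrow{p}\Z\xleftarrow{p}\cdots$, with nonzero $R^{1}\prolim$). The plan is to prove first, by a supremum argument on $\R$ of the kind that underlies Lemma~\ref{le:1260} itself, that for all $j$ and all $r\le s$ the transition map $H^j(X_s)\to H^j(X_r)$ is \emph{surjective}. One fixes a cohomology class of $X_r$, represents it by a cocycle, lifts the cocycle to a cochain of $X^j_s$ using~\eqref{eq:126a}, notes that the differential of this lift lies in the kernel subcomplex $\ker(X_s\to X_r)$, and then propagates surjectivity upward in $s$: at a limit point $\sigma$ one passes to the termwise projective limit along $\{r<\sigma\}$, which by~\eqref{eq:126b} produces a cocycle on $X_\sigma$, and one gets past $\sigma$ using the right-continuity~\eqref{eq:126c}. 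Once every cohomology transition map is known to be surjective, each tower $\{H^{j-1}(X_{r_n})\}_n$ has surjective transition maps, hence is Mittag--Leffler, so $R^{1}\prolim[n]H^{j-1}(X_{r_n})=0$; the desired isomorphism follows, and Lemma~\ref{le:1260} then gives that $H^j(X)$ is constant.

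I expect the supremum argument of the previous paragraph to be the main obstacle: making it precise means tracking the obstruction cochains through the limit steps and using the three hypotheses together --- surjectivity~\eqref{eq:126a} to lift cocycles, left-continuity~\eqref{eq:126b} of the complexes to form the projective limit at a limit point, and right-continuity~\eqref{eq:126c} of the cohomology to cross it. This is what makes the statement a genuine ``with complexes'' refinement of Lemma~\ref{le:1260} and of the results of~\cite{KS90}*{\S1.12}, rather than a formal consequence of them.
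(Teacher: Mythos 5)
Your plan is correct and, in substance, it reproduces the paper's own argument: the paper establishes exactly your two intermediate facts --- surjectivity of $H^j(X_s)\to\prolim[r<s]H^j(X_r)$ from \eqref{eq:126a}--\eqref{eq:126b} (this is \cite{KS90}*{Prop.~1.12.4~(a)}, i.e.\ your Milnor-sequence step), then surjectivity of the transition maps $H^j(X_t)\to H^j(X_s)$ from this together with \eqref{eq:126c}, then Mittag--Leffler, vanishing of $R^1\prolim$, bijectivity onto the projective limit (\cite{KS90}*{Prop.~1.12.4~(b)}), and finally the constant functor criterion. The one place where you diverge is the step you single out as the ``main obstacle'': the surjectivity of $H^j(X_t)\to H^j(X_s)$. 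This is not a new cochain-level problem at all; it is precisely the surjectivity half of \cite{KS90}*{Prop.~1.12.6} (the constant functor criterion in the form: if $\indlim[t>s]F_t\to F_s$ and $F_s\to\prolim[r<s]F_r$ are surjective for every $s$, then all transition maps are surjective), applied to the $\Sets$-valued functor $F=H^j(X)$, whose two hypotheses you have already verified --- \eqref{eq:126c} for the inductive limit and your Milnor surjectivity for the projective limit. So the supremum/Zorn propagation can (and should) be run entirely at the level of cohomology classes: at a limit point $\sigma$ one extends a compatible family of classes using the already-proved surjectivity of $H^j(X_\sigma)\to\prolim[r<\sigma]H^j(X_r)$, and one gets past $\sigma$ using \eqref{eq:126c}. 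Your proposed cochain-level version is workable but strictly harder than necessary: as sketched, passing to the termwise limit via \eqref{eq:126b} presupposes cocycle representatives chosen compatibly over all $r<\sigma$, which requires an extra adjustment-by-coboundaries argument (again using \eqref{eq:126a}) that you have not spelled out; the cohomology-level route avoids this entirely. With that replacement your outline closes completely and coincides with the proof in the paper, the only cosmetic difference being that the paper quotes \cite{KS90}*{Prop.~1.12.4, Prop.~1.12.6} where you re-derive them.
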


\begin{proof}
Consider the assertions for all $j\in\Z$, all $r,s\in\R$ with $r\leq s$:
\eq
&&\mbox{for any $j\in\Z$, $s\in\R$, the map  $H^j(X_s)\to \prolim[r<s] H^j(X_r)$ is surjective},\label{eq:pib1}\\
&&\mbox{for any $j\in\Z$, $r\leq s$, the map  $H^j(X_s)\to  H^j(X_r)$ is surjective},\label{eq:pib3}\\
&&\mbox{for any $j\in\Z$, $s\in\R$, the map $H^j(X_s)\to \prolim[r<s] H^j(X_r)$ is bijective}.\label{eq:pib2}
\eneq
Assertion~\eqref{eq:pib1} follows from hypotheses~\eqref{eq:126a} and~\eqref{eq:126b} by applying~\cite{KS90}*{Prop.~1.12.4~(a)}. 

\spa
Assertion~\eqref{eq:pib3} follows from~\eqref{eq:pib1} and hypothesis~\eqref{eq:126c} in view of~\cite{KS90}*{Prop.~1.12.6}.

\spa
It follows from~\eqref{eq:pib3} that for any $j\in\Z$ and  $s\in\R$, the projective system $\{H^j(X_r)\}_{r<s}$ satisfies the Mittag-Leffler condition. We get~\eqref{eq:pib2} by using~\cite{KS90}*{Prop.~1.12.4~(b)}, . 

\spa
To conclude, apply~\cite{KS90}*{Prop.~1.12.6}, using~\eqref{eq:126c} and~\eqref{eq:pib2}.
\end{proof}

\begin{theorem}[{The non-characteristic deformation lemma}]\label{th:272a}
Let\footnote{St{\'e}phane Guillermou informed us that Claude Viterbo has obtained some time ago a similar result (unpublished). }
 $M$ be a Hausdorff space 
and let $F\in \RD(\cor_M)$. Let $\{U_s\}_{s\in\R}$ be a family of open subsets of $M$. We assume
\banum
\item
for all $t\in\R$, $U_t=\bigcup_{s<t}U_s$,
\item
for all pairs $(s,t)$ with $s\leq t$, the set $\ol{U_t\setminus U_s}\cap\supp F$ is compact,
\item
setting $Z_s=\bigcap_{t>s}\ol{(U_t\setminus U_s)}$, we have for all pairs $(s,t)$ with $s\leq t$ and all $x\in Z_s$, 
$(\rsect_{X\setminus U_t}F)_x\simeq0$. 
\eanum
Then we have the isomorphism in $\RD(\cor)$, for all $t\in\R$
\eqn
&&\rsect(\bigcup_{s}U_s;F)\isoto\rsect(U_t;F).
\eneqn
\end{theorem}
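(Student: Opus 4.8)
The plan is to deduce from Lemma~\ref{le:126a} that $s\mapsto H^j(U_s;F)$ is constant on $\R$, and then to pass to the union $\bigcup_s U_s$ by a Mittag--Leffler argument. First I would fix a homotopically injective complex $I$ of injective $\cor_M$-modules quasi-isomorphic to $F$ (it exists because $\md[\cor_M]$ is a Grothendieck category); then every $I^k$ is flabby and, for each open $U\subseteq M$, the complex $\Gamma(U;I)$ represents $\rsect(U;F)$ functorially in $U$. Set $X_s\eqdot\Gamma(U_s;I)$; since $U_r\subseteq U_s$ for $r\leq s$ by (a), this is a functor $X\cl\R^\rop\to\RC(\cor)$. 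Hypothesis~\eqref{eq:126a} of Lemma~\ref{le:126a} holds because the $I^k$ are flabby, and~\eqref{eq:126b} holds because, again by (a), $U_s=\bigcup_{r<s}U_r$ is a directed union, so $\Gamma(U_s;I^k)=\prolim[r<s]\Gamma(U_r;I^k)$ by the sheaf property. It then remains only to check hypothesis~\eqref{eq:126c}, namely $\indlim[t>s]H^j(U_t;F)\isoto H^j(U_s;F)$ for all $j,s$, after which Lemma~\ref{le:126a} gives the constancy of $H^j(U_s;F)$.

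The verification of~\eqref{eq:126c} is the geometric heart of the argument; here I would simply reproduce the reasoning of the bounded case~\cite{KS90}*{Prop.~2.7.2}, which takes place at the level of $F$ and of local cohomology sheaves. For $s\leq t$, the locally closed subset $U_t\setminus U_s$ of $M$ gives the sheaf $\rsect_{U_t\setminus U_s}F$, which is supported in $\ol{U_t\setminus U_s}\cap\supp F$ (compact by (b)); applying $\rsect(M;-)$ to the excision triangle of the pair $U_s\subseteq U_t$ produces a distinguished triangle $\rsect(M;\rsect_{U_t\setminus U_s}F)\to\rsect(U_t;F)\to\rsect(U_s;F)\xrightarrow{+1}$ whose second arrow is the restriction, and one also has the triangle $\rsect_{M\setminus U_t}F\to\rsect_{M\setminus U_s}F\to\rsect_{U_t\setminus U_s}F\xrightarrow{+1}$. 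Because filtered colimits are exact and $\indlim[t>s]\rsect(U_s;F)=\rsect(U_s;F)$, assertion~\eqref{eq:126c} amounts to $\indlim[t>s]\rsect(M;\rsect_{U_t\setminus U_s}F)\simeq0$. Fixing $t_0>s$, the colimit over $\{t:t>s\}$ coincides with the one over the cofinal set $\{t:s<t\leq t_0\}$, along which all these sheaves are supported in the fixed compact set $\ol{U_{t_0}\setminus U_s}\cap\supp F$; since $\rsect(M;-)$ commutes with filtered colimits of sheaves supported in a fixed compact subset, it suffices to show that $\indlim[t>s]\rsect_{U_t\setminus U_s}F$ has vanishing stalk at each point $x$. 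If $x\notin Z_s=\bigcap_{t>s}\ol{U_t\setminus U_s}$, pick $t_1>s$ with $x\notin\ol{U_{t_1}\setminus U_s}$; as these closures are nondecreasing in $t$, $(\rsect_{U_t\setminus U_s}F)_x\simeq0$ for all $s<t\leq t_1$, so the colimit of the stalks is $0$. If $x\in Z_s$, take stalks at $x$ in the second triangle: hypothesis (c) for the pair $(s,t)$ gives $(\rsect_{M\setminus U_t}F)_x\simeq0$ and hypothesis (c) for the pair $(s,s)$ gives $(\rsect_{M\setminus U_s}F)_x\simeq0$, whence $(\rsect_{U_t\setminus U_s}F)_x\simeq0$ and again the colimit of the stalks is $0$. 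This establishes~\eqref{eq:126c}.

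It then remains to go from constancy on $\R$ to the statement about $U\eqdot\bigcup_s U_s$. The countable subfamily $\{U_n\}$ indexed by the non-negative integers is cofinal (each $U_s$ lies in $U_{\lceil s\rceil}$), so $U=\bigcup_n U_n$ and, the $I^k$ being flabby, $\Gamma(U;I^k)=\prolim[n]\Gamma(U_n;I^k)$ with surjective transition maps. Hence $\Gamma(U;I)=\prolim[n]\Gamma(U_n;I)$ computes the homotopy inverse limit of the $\rsect(U_n;F)$, and for every $j$ there is a Milnor short exact sequence $0\to R^1\prolim[n]H^{j-1}(U_n;F)\to H^j(U;F)\to\prolim[n]H^j(U_n;F)\to0$. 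By the constancy established above, the transition maps of the systems $\{H^\bullet(U_n;F)\}_n$ are isomorphisms, so the $R^1\prolim[n]$-term vanishes and $\prolim[n]H^j(U_n;F)=H^j(U_{n_0};F)$ for any $n_0$; thus the restriction $\rsect(U;F)\to\rsect(U_{n_0};F)$ is an isomorphism in $\RD(\cor)$. Composing it, for $n_0\geq t$, with the restriction $\rsect(U_{n_0};F)\to\rsect(U_t;F)$ — again an isomorphism, by constancy — gives the desired isomorphism $\rsect(\bigcup_s U_s;F)\isoto\rsect(U_t;F)$.

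The step I expect to be the real obstacle is not~\eqref{eq:126c} — which, being purely about the sheaf $F$ and local cohomology, goes through unchanged from~\cite{KS90}, its only inputs being (b) (to commute the filtered colimit past $\rsect(M;-)$) and (c) (applied to the pairs $(s,t)$ and $(s,s)$) — but the homological bookkeeping forced by the absence of boundedness: one must work from the start with a homotopically injective resolution, so that $s\mapsto\Gamma(U_s;I)$ is a genuine complex-valued functor enjoying the flabbiness-driven properties~\eqref{eq:126a}--\eqref{eq:126b}; and one must pass to the union via the surjectivity of the transition maps and a countable cofinal subfamily (the Mittag--Leffler/Milnor mechanism) rather than by truncating as in the bounded case. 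A further standard fact used above is that, over a merely Hausdorff $M$, $\rsect(M;-)$ commutes with filtered colimits of sheaves supported in a fixed compact subset.
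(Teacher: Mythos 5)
Your overall architecture coincides with the paper's: represent $F$ by a homotopically injective complex with injective (hence flabby) components, check hypotheses~\eqref{eq:126a} and~\eqref{eq:126b} of Lemma~\ref{le:126a} via flabbiness and the sheaf axiom, reduce everything to the colimit isomorphism~\eqref{eq:126c}, and your explicit Milnor/Mittag--Leffler passage from constancy on $\R$ to the union $\bigcup_s U_s$ is correct (the paper leaves that step implicit, so spelling it out is welcome). The genuine problem is in your verification of~\eqref{eq:126c}. You reduce $\indlim[t>s]H^j(M;\rsect_{U_t\setminus U_s}F)\simeq 0$ to stalkwise vanishing by invoking, as a ``standard fact'', that $\rsect(M;-)$ commutes with filtered colimits of complexes supported in a fixed compact set. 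In the unbounded setting this is not standard and is false in general: after cutting down to the support it amounts to asking that $\rsect(K;-)$ preserve filtered homotopy colimits for $K=\ol{U_{t_0}\setminus U_s}\cap\supp F$, and this fails as soon as $K$ has infinite cohomological dimension (the constant sheaf is then not a compact object of $\RD(\cor_K)$, so already countable coproducts, hence the filtered colimits of their finite partial sums, are not preserved). Nothing in hypotheses (a)--(c) bounds the cohomological dimension of these compact sets, and the absence of boundedness is the whole point of the theorem; so this is precisely not a step that ``goes through unchanged'' from~\cite{KS90}, where the commutation is applied to bounded-below complexes.

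The paper's proof of~\eqref{eq:isoas} is organized exactly so as never to commute $\rsect$ with a colimit of varying coefficients. For each fixed $t$, hypothesis (c) applied to the pairs $(s,t)$ and $(s,s)$ gives $(\rsect_{U_t\setminus U_s}F)\vert_{Z_s}\simeq 0$ --- the same vanishing you establish stalkwise, but used for a single fixed complex. Then only the cohomology of this one fixed complex over the compact set $Z_s$ is expressed as a colimit over its open neighborhoods $U$, and the topological observation that every such $U$ satisfies $U\cap U_t\supset U_{t'}\setminus U_s$ for some $t'$ with $s<t'\leq t$ converts that neighborhood colimit into $\indlim[t'>s]H^j(U_{t'};\rsect_{M\setminus U_s}F)$, which therefore vanishes, and the triangle $\rsect_{M\setminus U_s}F\to F\to\rsect_{U_s}F\to[+1]$ concludes. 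To repair your argument you must either justify the interchange for the specific system at hand --- which in effect forces you back to this neighborhood/cofinality argument on $Z_s$ --- or impose a finiteness (cohomological dimension) hypothesis, which would defeat the purpose of the unbounded statement.
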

We shall adapt the proof of  \cite{KS90}*{Prop.~2.7.2}, using Lemma~\ref{le:126a}.
\begin{proof}
(i) Following loc.\ cit., we shall first prove the isomorphism:
\eq\label{eq:isoas}
&&(a)^s\cl \indlim[t>s]H^j(U_t;F)\isoto H^j({U_s};F), \text{ for all } j
\eneq
Replacing $M$ with $\supp F$, we may assume from the beginning that $\ol{U_t\setminus U_s}$ is compact. For $s\leq t$, consider the distinguished triangle
\eqn
&&(\rsect_{M\setminus U_t}F)\vert_{Z_s}\to(\rsect_{M\setminus U_s}F)\vert_{Z_s}\to(\rsect_{U_t\setminus U_s}F)\vert_{Z_s}\to[+1].
\eneqn
The two first terms are $0$ by hypothesis (c).  Therefore $(\rsect_{U_t\setminus U_s}F)\vert_{Z_s}\simeq0$ and we get
\eqn
0&\simeq&H^j(Z_s;\rsect_{U_t\setminus U_s}F)\simeq\indlim[U\supset Z_s]H^j(U\cap U_t;\rsect_{M\setminus U_s}F), \text{ for all } j
\eneqn
where $U$ ranges over the family of open neighborhoods of $Z_s$. 

For any such $U$ there exists $t'$ with $s<t'\leq t$ such that $U\cap U_t\supset U_{t'}\setminus U_s$. Therefore,
\eqn
&&\indlim[t, t>s] H^j(U_t;\rsect_{M\setminus U_s}F)\simeq0 \text{ for all } j.
\eneqn
By using the distinguished triangle $\rsect_{M\setminus U_s}F\to F\to\rsect_{U_s}F\to[+1]$, 
we get~\eqref{eq:isoas}.  

\spa
(ii) We shall follow~\cite{KS06}*{Prop. 14.1.6, Th. 14.1.7} and recall that if $\shc$ is a Grothendieck category, then any object of $\RC(\shc)$ is qis to a homotopically injective object whose components are injective. Hence, given $F\in \RD(\cor_M)$, we may represent it by a  homotopically injective object $F^\scbul\in\RC(\cor_M)$ whose components $F^k$ are injective. 
Then $\rsect(U_s;F)$ is represented by $\sect(U_s;F^\scbul)\in\RC(\cor)$. 
Set
\eqn
&& X^k_s=\sect(U_s;F^k), \quad X_s=\sect(U_s;F^\scbul).
\eneqn
Then ~\eqref{eq:126a} is satisfied since $F^k$ is flabby, 
\eqref{eq:126b} is satisfied since $F^k$ is a sheaf and 
\eqref{eq:126c} is nothing but~\eqref{eq:isoas}. 

\spa
(iii) To conclude,   apply Lemma~\ref{le:126a}.
\end{proof}

\section{The constant functor criterion for $\Spaces$}\label{sect:constfct}

\subsection{On $\infty$-categories}
The aim of this subsection is essentially notational and references  are made to~\cite{Lu09, Lu16}. We use Joyal's quasi-categories to model $(\infty,1)$-categories. If not necessary we will simply use the terminology $\infty$-categories.

Denote by $\infCat$ the $(\infty,1)$-category of all $(\infty,1)$-categories in a given universe $\shu$ and by 
$\oneCat$ the $1$-category of all $1$-categories in $\shu$. 

To $\shc\in\oneCat$, one associates its nerve, $N(\shc)\in\infCat$. 
Denoting by $N(\oneCat)$ the image of $\oneCat$ by $N$,
the embedding $\iota\cl N(\oneCat)\into\infCat$ admits a left adjoint $\mathrm{h}$. We get the functors :
\eqn
\xymatrix{
&& \mathrm{h}\cl\quad \infCat\ar@<.5ex>[r]&\ar@<.5ex>[l] N(\oneCat) \quad \cl\iota
}\eneqn
Hence, $\mathrm{h}\circ\iota\simeq\id_1$ and there exists a natural morphism of $\infty$-functors 
$\id_\infty\to\iota\circ\mathrm{h}$, where $\id_1$ and $\id_\infty$ denote the identity functors of the categories $\oneCat$ and $\infCat$, respectively. 

Looking at  $\infCat$ as a simplicial set,  its degree $0$ elements are the $(\infty,1)$-categories, its degree $1$ elements are the $\infty$-functors, etc. Hence
the functor $\mathrm{h}$ sends a $(\infty,1)$-category to a usual category, an $\infty$-functor to a usual functor, etc. 
Its sends a stable  $(\infty,1)$-category to a triangulated category where the distinguished triangles are induced by the cofiber-fiber sequences. Moreover, it sends an $\infty$-functor to a triangulated functor, etc. See \cite[1.1.2.15]{Lu16}.

Let $\Spaces$ (resp. $\Spaces_*$) denote the $(\infty,1)$-category of spaces (resp. pointed spaces) \cite[1.2.16.1]{Lu09}. Informally, one can think of $\Spaces$ as a simplicial set whose vertices are CW-complexes, 1-cells are continuous maps, 2-cells are homotopies between continuous maps, etc. Recall 
that $\Spaces$ admits small limits and colimits in the sense of \cite[1.2.13]{Lu09}.
Moreover, by Whitehead's theorem, a map $f:X\to Y$ in $\Spaces$ is an equivalence if and only if the induced map
$\pi_0(f):\pi_0(X)\to \pi_0(Y)$ is an isomorphism of sets and for every base point $x\in X$, the induced maps
$\pi_n(X,x)\to \pi_n(Y, f(x))$ are isomorphisms for all $n\geq 1$. 

It is also convenient to recall the existence of a Grothendieck construction for $(\infty,1)$-categories. Namely, for any $(\infty,1)$-category $\shc$ we have an equivalence of $(\infty,1)$-categories 
\eq \label{eq:grothendieckconstruction}
\mathrm{St}:\,\infCat^\mathrm{cart}/\shc \simeq \mathrm{Fun}(\shc^\rop, \infCat)\, :\mathrm{Un}
\eneq
where on the l.h.s we have the $(\infty,1)$-category of $\infty$-functors $\shd\to \shc$ that are cartesian fibrations and functors that preserve cartesian morphisms (see \cite[Def. 2.4.1.1]{Lu09}), and on the r.h.s. we have the $(\infty,1)$-category of $\infty$-functors from $\shc^\rop$ to $\infCat$. See \cite[3.2.0.1]{Lu09}. The same holds for diagrams in $\Spaces$, where we find 
\eq \label{eq:grothendieckconstruction2}
\mathrm{St}:\infCat^\mathrm{Right-fib}/\shc \simeq \mathrm{Fun}(\shc^\rop, \Spaces)\, :\mathrm{Un}
\eneq
where this time on the l.h.s. we have the $(\infty,1)$-category of $\infty$-functors $\shd\to \shc$ that are right fibrations. See \cite[2.2.1.2]{Lu09}. 
The equivalence \eqref{eq:grothendieckconstruction2} will be useful for the following reason: for any diagram $X:\shc^\rop\to\Spaces$, its limit in $\Spaces$ can be identified with the space of sections of $\mathrm{Un}(X)$ \cite[3.3.3.4]{Lu09}:
\eq \label{eq:grothendieckconstructionlimt}
\lim X\simeq \mathrm{Map}_{\shc}(\shc, \mathrm{Un}(X)).
\eneq

\subsection{A criterion for a functor to be constant}
In this subsection, we generalize \cite{KS90}*{Prop.~1.12.6} to the case of an $\infty$-functor.  
Let $X\cl \R^\rop\to \Spaces$ be an $\infty$-functor. We set 
\eq \label{eq:maps}
&& X_s=X(s),\quad \rho_{s,t}\cl X_t\to X_s \, (s\leq t).
\eneq

\begin{lemma}\label{le:126}
Let $X\cl \R^\rop\to \Spaces$ be an $\infty$-functor. Assume that for each $s\in\R$, the natural morphisms in $\Spaces$
\eq \label{le:126-eq}
&&\colim[s<t]X_t\to X_s\to\plim[r<s]X_r
\eneq
\noindent both are equivalences. Then  for every $t\geq s$, the morphism $X_t\to X_s$ in $\Spaces$ is an equivalence.
\end{lemma}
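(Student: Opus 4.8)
The plan is to deduce the statement from the $1$-categorical criterion, Lemma~\ref{le:1260}, applied to the homotopy of $X$, and to conclude by Whitehead's theorem. Two framing observations: the colimit in~\eqref{le:126-eq} is \emph{filtered}, being indexed by the directed poset $(\R_{>s},\geq)$, whereas the limit is cofiltered and $\{s-1/k\}_{k\geq1}$ is cofinal in it, so that $\plim[r<s]X_r\simeq\plim[k]X_{s-1/k}$ is a \emph{sequential} limit, to which Milnor's ${\plim}^{1}$-exact sequence applies; and the truncation functors $\tau_{\leq n}$, as well as the homotopy functors $\pi_j$, commute with filtered colimits in $\Spaces$. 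Since $X_s\simeq\plim[n]\tau_{\leq n}X_s$ (convergence of Postnikov towers) and limits preserve equivalences, it suffices to prove the lemma for functors with $n$-truncated values and then, for a general $X$, to check that $\tau_{\leq n}X$ again satisfies the hypotheses.

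For $X$ with $n$-truncated values I would argue by descending induction on $j$, from $j=n+1$ (where $\pi_jX_s\equiv0$) down to $j=0$, that $s\mapsto\pi_jX_s$ is constant. The inductive-limit half of the hypothesis of Lemma~\ref{le:1260} is immediate since $\pi_j$ preserves filtered colimits. For the projective-limit half, the inductive hypothesis makes the tower $\{\pi_{j+1}X_{s-1/k}\}_k$ have bijective, in particular surjective, transition maps, hence satisfy the Mittag-Leffler condition, so ${\plim[k]}^{1}\pi_{j+1}X_{s-1/k}=0$, and Milnor's sequence gives $\pi_jX_s=\pi_j(\plim[k]X_{s-1/k})\isoto\plim[k]\pi_jX_{s-1/k}=\plim[r<s]\pi_jX_r$; Lemma~\ref{le:1260} then yields constancy of $\pi_jX$. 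Descending to $j=0$ and invoking Whitehead's theorem completes this case. The coherent basepoints needed to regard $\pi_jX$ ($j\geq1$) as a set-valued functor on $\R^\rop$ and to align the basepoints along the tower $\{X_{s-1/k}\}_k$ are arranged, once $\pi_0X$ is constant, by replacing $X$ with the sub-functor carried by a fixed connected component --- which preserves both hypotheses, filtered colimits being left exact in $\Spaces$ and limits commuting with the pullback defining a component --- and choosing a point of the non-empty space $\plim[{\R^\rop}]X$.

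For a general $X$, the inductive-limit half of the hypothesis for $\tau_{\leq n}X$ holds since $\tau_{\leq n}$ preserves filtered colimits; the projective-limit half $\tau_{\leq n}X_s\isoto\plim[r<s]\tau_{\leq n}X_r$ is the delicate point. Comparing the Milnor sequences of $\plim[k]X_{s-1/k}$ and $\plim[k]\tau_{\leq n}X_{s-1/k}$, it reduces exactly to the vanishing of ${\plim[k]}^{1}\pi_{n+1}X_{s-1/k}$, i.e. to the Mittag-Leffler property of the tower $\{\pi_{n+1}X_{s-1/k}\}_k$; but here, unlike in the truncated case, there is no top homotopy degree from which to launch the descending cascade, and this is the step I expect to be the main obstacle. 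Two routes seem plausible: an interlocking induction treating the $\tau_{\leq n}X$ in increasing order, using the constancy already obtained for $\tau_{\leq n-1}X$ to control the relevant towers; or a ``maximal sub-interval'' argument in the spirit of the classical proof of Lemma~\ref{le:1260} (and of the proof of Lemma~\ref{le:126a}) --- for fixed $s_0$ one takes the largest $[s_0,\sigma]$ on which $\rho_{s_0,u}$ is an equivalence for all $u$ (the projective-limit hypothesis makes $\rho_{s_0,\sigma}$ an equivalence, being a limit of equivalences over the weakly contractible index $[s_0,\sigma)$) and contradicts $\sigma<\infty$ via the inductive-limit hypothesis at $\sigma$, applied to the relative functor $u\mapsto X_u\times_{X_\sigma}\{x\}$ over $X_\sigma$, which inherits the hypotheses but whose colimit as $u\to\sigma^{+}$ is contractible. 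A final routine point is to exclude the degenerate case in which some $X_s$ is empty: this cannot occur, since the inductive-limit hypothesis propagates emptiness to larger $s$ whereas a non-empty sequential limit has non-empty terms.
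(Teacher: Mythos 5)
The proposal contains a genuine gap, and you have in fact flagged it yourself: the reduction of the general case to the $n$-truncated case requires the limit half of the hypothesis for $\tau_{\leq n}X$, i.e.\ the vanishing of $R^1\plim[r<s]\pi_{n+1}(X_r)$, and this is exactly the point you leave open (``the main obstacle''). Neither of your two suggested routes closes it as stated. The increasing induction on truncations falls short by two degrees: knowing $\tau_{\leq n-1}X$ constant only gives Mittag--Leffler for the towers $\pi_m(X_r)$ with $m\leq n-1$, while the limit hypothesis for $\tau_{\leq n}X$ needs it for $m\leq n+1$ (both to compare $\pi_j(\plim[r<s]\tau_{\leq n}X_r)$ with $\plim[r<s]\pi_j(X_r)$ and to identify the latter with $\pi_j(X_s)$ via the Milnor sequence of $X$ itself). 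The ``maximal sub-interval'' route is only a sketch. The idea you are missing is the one the paper uses, and it removes any need for truncations or for induction on the homotopy degree: for each fixed $n$, the Milnor sequence gives \emph{unconditionally} (no $R^1\plim$ input) that $\pi_n(X_s)\to\plim[r<s]\pi_n(X_r)$ is surjective; combined with $\colim[t>s]\pi_n(X_t)\isoto\pi_n(X_s)$, the surjectivity part of Kashiwara's criterion (\cite{KS90}*{Prop.~1.12.6}) then makes all transition maps $\pi_n(X_t)\to\pi_n(X_s)$ surjective, hence every tower $\{\pi_n(X_r)\}_{r<s}$ is Mittag--Leffler, for every $n$ simultaneously. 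This kills $R^1\plim[r<s]\pi_{n+1}(X_r)$, upgrades the Milnor surjection to a bijection, and the full criterion gives constancy of each $\pi_n$; Whitehead concludes. Your descending induction in the truncated case is correct, but it is precisely the crutch that does not survive the passage to general $X$, whereas the simultaneous-in-$n$ argument does.

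A secondary, fixable but real, defect concerns base points. To run the argument for $\pi_n$ with $n\geq1$ you need a compatible system of base points \emph{before} knowing anything about constancy, and your recipe (choose a point of $\plim[s\in\R]X_s$ after fixing a component, ``once $\pi_0X$ is constant'') is circular: constancy of $\pi_0X$ is the last step of your induction and itself uses $R^1\plim\pi_1=0$, hence base points; moreover non-emptiness of the limit over all of $\R$ is not justified by non-emptiness of the $X_s$ alone. The paper's device is to restrict to $\R_{<c}$ for arbitrary $c$ and use the hypothesis $X_c\isoto\plim[r<c]X_r$ so that any point of $X_c$ lifts $X\vert_{\R_{<c}}$ to $\Spaces_*$ (and then to vary the lift in a final step so as to treat all base points of a given $X_t$). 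You would need some such reduction for your truncated-case argument to be rigorous as well.
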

The proof adapts to the case of $\Spaces$ that of \cite{KS90}*{Prop.~1.12.6} and will also use this result.

\begin{proof}
\hfill \\
\noindent (Step I) It is enough to prove that for each $c\in \mathbb{R}$, the restriction of $X$ to $\mathbb{R}_{<c}$ is constant. 

\vspace{0.5cm}

\noindent (Step II: Choosing base points) Let $c\in \mathbb{R}$ and let again $X$ denote the restriction of $X$ to $\mathbb{R}_{<c}$. The hypothesis 
$$
\plim[s<c] X_s\simeq X_c
$$
ensures that the choice of a base point in $X_c$ determines a compatible system of base points up to homotopy at every $X_s$ with $s<c$, i.e. the choice of a 2-simplex $\sigma:\Delta^2\to \infCat$
\eq\label{eq:lifting}
\xymatrix{
& \Spaces_{*}\ar[d]\\
\mathrm{N}(\R^\rop_{<c}) \ar@{-->}[ru]^{\overline{X}}\ar[r]_X& \Spaces. \ar@{}[ul]_(.30){\sigma}
}
\eneq
For the reader's convenience we explain how to construct the 2-simplex $\sigma$. Thanks to \eqref{eq:grothendieckconstructionlimt}, the limit $\plim[s<c] X_s$ can be identified with the category of sections of the right fibration $\mathrm{Un(X)}\to \mathrm{N}(\R_{<c})$. Therefore, the choice of a base point in $X_c$ provides a section of $\mathrm{Un}(X)$, which we can see as a map from the trivial cartesian fibration $\mathrm{Id}:\mathrm{N}(\R_{<c})\to \mathrm{N}(\R_{<c})$ to $\mathrm{Un}(X)$. Its image via the functor $\mathrm{St}$ of \eqref{eq:grothendieckconstruction2}  provides the lifting \eqref{eq:lifting}.

\vspace{0.3cm}

Recall that the forgetful functor $\Spaces_*\to \Spaces$ preserves filtrant colimits and all small limits and is conservative. Therefore the hypothesis are also valid for $\overline{X}$.

\vspace{0.3cm}

\noindent (Step III: working with a fixed choice of base points)
Choose any lifting $\overline{X}$ of $X$.  We have for each $n\in\N$, $s\in\R_{<c}$, a short exact sequence\footnote{of groups when $n\geq 1$ and pointed sets when $n=0$.}, called the Milnor exact sequence (see for instance~\cite[Prop. 2.2.9]{MP12}\footnote{The Milnor exact sequence is usually define for $\mathbb{N}^\rop$-towers. However, the argument works for $\mathbb{R}^\rop$-towers as the inclusion $\mathbb{N}^\rop\subseteq \mathbb{R}^\rop$ is cofinal.}):
 \eq\label{eq:milnor}
 &&0\to R^1\plim[r< s] \pi_{n+1}(\overline{X}_r)\to \pi_n( \plim[r< s] \overline{X}_r)\to \plim[r< s] \pi_n(\overline{X}_r)\to 0.
 \eneq
Under the hypothesis of the lemma, we get short exact sequences:
  \eq\label{eq:milnor2}
 &&0\to R^1\plim[r< s] \pi_{n+1}(\overline{X}_r)\to \pi_n(\overline{X}_s) \to \plim[r< s] \pi_n(\overline{X}_r)\to 0.
 \eneq

\spa
 For each $n\geq 0$, each $s, t\in\R_{\leq x}$ with $t\geq s$, we shall prove:
 \eq
&&\mbox{the map }\colim[c>t>s]\pi_n(\overline{X}_t)\to \pi_n(\overline{X}_s) \mbox{ is bijective,}\label{eq:pia2}\\
&&\mbox{the map } \pi_n(\overline{X}_s)\to \plim[r<s] \pi_n(\overline{X}_r)\mbox{ is surjective,}\label{eq:pib22}\\
&&\mbox{the map } \pi_n(\overline{X}_t)\to \pi_n(\overline{X}_s)\mbox{ is surjective,}\label{eq:pic2}\\
&&\mbox{the map } \pi_n(\overline{X}_s)\to \plim[r<s] \pi_n(\overline{X}_r)\mbox{ is bijective.}\label{eq:pid2}
  \eneq

\spa
Assertion~\eqref{eq:pia2} follows from the hypothesis, the fact that the system $\{t: c>t>s\}$ is cofinal in $\{t: t>s\}$ and the fact that $\pi_n$ commutes with filtrant colimits for $n\geq 0$. \\

Assertion~\eqref{eq:pib22} follows from~\eqref{eq:milnor2}. \\

Let us prove~\eqref{eq:pic2}. 
By the surjectivity result in~\cite{KS90}*{Prop.~1.12.6}, it is enough to prove  the surjectivity of  
$\colim[c>t>s]\pi_n(\overline{X}_t)\to \pi_n(\overline{X}_s)$ and  $\pi_n(\overline{X}_s)\to \plim[r<s] \pi_n(\overline{X}_r)$ for all $s\in\R_{<c}$, which follows from~\eqref{eq:pia2}
and~\eqref{eq:pib22}.\\

By \eqref{eq:pic2} we know that the projective systems  $\{\pi_n(\overline{X}_r)\}_{r<s}$ satisfy the Mittag-Leffler condition for all $n\geq 0$, $s< c$. Therefore,  $R^1\plim[r< s] \pi_{n+1}(\overline{X}_r)\simeq 0$ for all $n$, all $s\in\R_{<c}$ and~\eqref{eq:pid2} follows from~\eqref{eq:milnor2}. Therefore, we have isomorphisms for every $n\geq 0$

\eqn
&&\colim[s<t<c] \pi_n(\overline{X}_t)\simeq \pi_n(\overline{X}_s)\simeq \plim[r<s]\pi_n(\overline{X}_r).
\eneqn
Applying~\cite{KS90}*{Prop.~1.12.6}, we get that 
the diagram of sets $s\mapsto \pi_n(\overline{X}_s)$ is constant for every $n$. 

\vspace{0.3cm}

\noindent (Step IV: End of the Proof)
The conclusion of Step III holds for any lifting $\overline{X}$ of the restriction of $X$ to $\mathbb{R}_{<c}$. As the result holds for $n=0$, the diagram $s< c \mapsto \pi_0(X_s)$ is also constant, seen as a diagram of sets rather than pointed sets.

\vspace{0.3cm}

To conclude one must show that for any $n\in\N$, $t\geq s\in \mathbb{R}_{<c}$ and for every choice of a base point $y$ in $X_t$, the induce maps
\eq \label{eq:mapshigherhomotopygroups}
&& \rho^n_{s,t}\cl \pi_n(X_t, y)\to \pi_n(X_s, \rho^n_{s,t}(y)).
\eneq
are bijective. Since, for $\alpha<c$, $\alpha\mapsto \pi_0(X_\alpha)\in \Sets$ is constant, choosing $l \in \R$ with $t<l < c$, $y$ determines a unique element $\bar{y}$ in $\pi_0(X_l)$ and again using the hypothesis $X_l \simeq \lim_{r< l}X_r$, the choice of a representative for $\bar{y}$ determines an homotopy compatible system of base points at every $X_r$ for $r<l$ and therefore a new lifting $\overline{X}$ of the restriction of $X$ to $\R_{<l}$ whose associated base point at $X_t$ is a representative of $y$ and the composition with $\pi_n$ provides the maps \eqref{eq:mapshigherhomotopygroups}. By \eqref{eq:pia2}, \eqref{eq:pib22}, \eqref{eq:pic2}, \eqref{eq:pid2} and ~\cite{KS90}*{Prop.~1.12.6} the maps \eqref{eq:mapshigherhomotopygroups} are isomorphisms.
This conclusion holds for any $c\in \mathbb{R}$ and thus for any $t\geq s$ in $\R^\rop$.
\end{proof}

We refer to \cite[5.5.7.1]{Lu09}  for the notion of presentable compactly generated $(\infty,1)$-category.

\begin{corollary}\label{cor:126}
Let $\shc$ be a presentable compactly generated $(\infty,1)$-category and let  $X\cl \R^\rop\to \shc$ be an $\infty$-functor. Assume that for each $s\in\R$, the natural morphisms
\eqn \label{le:126-eqb}
&&\colim[s<t]X_t\to X_s\to\plim[r<s]X_r
\eneqn
both are equivalences. Then for any $t\geq s$ the induced map $X_t\to X_s$ is an equivalence.
\end{corollary}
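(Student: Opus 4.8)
The plan is to reduce the statement about a general presentable compactly generated $\infty$-category $\shc$ to the already-proved case of $\Spaces$, Lemma~\ref{le:126}. The key tool is the family of corepresentable functors: since $\shc$ is compactly generated, there is a small set $\{c_\alpha\}$ of compact generators, and the family of mapping-space functors $\Map_\shc(c_\alpha,-)\cl\shc\to\Spaces$ is jointly conservative. Moreover each $\Map_\shc(c_\alpha,-)$ preserves all small limits (being corepresentable) and, crucially, because $c_\alpha$ is compact, also preserves filtered colimits. Hence for a functor $X\cl\R^\rop\to\shc$ satisfying the hypotheses, each composite $Y^\alpha\eqdot\Map_\shc(c_\alpha,-)\circ X\cl\R^\rop\to\Spaces$ satisfies, for every $s$,
\eqn
&&\colim[s<t]Y^\alpha_t\isoto Y^\alpha_s\isoto\plim[r<s]Y^\alpha_r,
\eneqn
since filtered colimits in $\R^\rop$ are preserved by compactness and cofiltered limits are preserved by corepresentability.

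First I would recall the relevant facts: a presentable compactly generated $\infty$-category admits all small limits and colimits (\cite[5.5.7.1, 5.5.0.1]{Lu09}), it has a set of compact objects that generate it under filtered colimits, and the corepresentable functors attached to a generating set of compact objects are jointly conservative and preserve filtered colimits. I would then form the functors $Y^\alpha$ as above, verify that the two displayed morphisms for $Y^\alpha$ are equivalences in $\Spaces$ using that $\Map_\shc(c_\alpha,-)$ commutes with the colimit over $\{t>s\}$ (a filtered poset) and with the limit over $\{r<s\}$, and apply Lemma~\ref{le:126} to conclude that for every $\alpha$ and every $t\geq s$ the map $Y^\alpha_t\to Y^\alpha_s$ is an equivalence in $\Spaces$. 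Finally, joint conservativity of the $\Map_\shc(c_\alpha,-)$ forces $X_t\to X_s$ to be an equivalence in $\shc$.

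There is one point that requires a little care, which I expect to be the main (mild) obstacle: the colimit appearing in the hypothesis is the colimit over the diagram $\{t:t>s\}$ viewed as a subcategory of $\R^\rop$, i.e.\ it is a \emph{filtered} colimit, so that compactness of $c_\alpha$ does apply; one should make explicit that $\{t>s\}$ (ordered by $\R^\rop$, i.e.\ by $\geq$) is a filtered (in fact directed) poset, and similarly that the cofinality arguments used in Lemma~\ref{le:126} — passing from $\R$ to $\N$ via a cofinal inclusion — cause no trouble here because we are only invoking Lemma~\ref{le:126} as a black box. A second, even smaller point: one must know that $\Map_\shc(c,-)$ preserves filtered colimits, which is precisely the definition of compactness of $c$ (\cite[5.3.4.5]{Lu09}), so no extra work is needed. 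Once these observations are in place the corollary follows immediately, and the whole argument is a routine application of conservativity plus the $\Spaces$-valued case.
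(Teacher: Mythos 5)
Your proposal is correct and is essentially the paper's own proof: the paper also reduces to Lemma~\ref{le:126} by applying $\mathrm{Map}(Z,-)$ for each compact object $Z$, relying on the same facts (compactness gives preservation of the filtered colimit over $\{t>s\}$, corepresentability gives preservation of the limit, and joint conservativity of these functors concludes). You simply spell out the details that the paper leaves implicit.
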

\begin{proof}
Apply the Lemma~\ref{le:126} to all mapping spaces $\mathrm{Map}(Z, X_t)$ for each compact object $Z$.
\end{proof}

\begin{remark}
This result does not apply to $\shc=\R^\rop$ and $X$ the identity functor. Indeed, $\R^\rop$ is not compactly generated in the sense of \cite[5.5.7.1]{Lu09}.
\end{remark}

\begin{remark}
As noticed by M.Porta, the category $\mathbb{R}^{\mathrm{op}}$ being contractible, the condition that for any $t\geq s$ the induced map $X_t\to X_s$ is an equivalence, is equivalent to $X$ being a constant functor.
\end{remark}

\section{Micro-support }
\subsection{The non-characteristic deformation lemma with stable coefficients}
\label{section:deformationlemmastable}
In this subsection, we generalize \cite{KS90}*{Prop.~2.7.2} and Theorem \ref{th:272a} to more general coefficients.  Let $\shd$ be a presentable compactly generated stable $(\infty,1)$-category. Given a topological space $M$ we denote by $\Op_M$ its category of open subsets. One defines an higher categorical version of sheaves on $M$ as follows. Let $\mathrm{Psh}(M, \shd)$ denote the $(\infty,1)$-category of $\infty$-functors 
$$\mathrm{N}(\Op_M^{\mathrm{op}})\to \shd$$

See \cite[1.2.7.2, 1.2.7.3]{Lu09}. The category $\Op_M$ is equipped with a Grothendieck topology whose covering of $U$ are the families $\{U_i\}_i$ such that $U_i\subseteq U$  and $\bigcup_\alpha U_\alpha = U$. We let $\mathrm{Sh}(M, \shd)^\wedge$  denote the full subcategory of $\mathrm{Psh}(M, \shd)$ spanned by those functors that satisfy the sheaf condition and are hypercomplete. See \cite[6.2.2]{Lu09} and \cite[Section 1.1]{DAGV} for the theory of $\infty$-sheaves and \cite[6.5.2, 6.5.3, 6.5.4]{Lu09} for the notion of hypercomplete. The $(\infty,1)$-category $\mathrm{Sh}(M, \shd)^\wedge$ is again a stable compactly generated $(\infty,1)$-category and when $M=\rmpt$, one recovers $\mathrm{Sh}(M, \shd)^\wedge\simeq \shd$.

The usual pullback and push-forward functorialites can be lifted to the higher categorical setting and are given by exact functors. See for instance the discussion in \cite[Section 2.4]{YP16}. Let $j_U\cl U\into M$ be an open embedding and let $a_M\cl M\to\rmpt$ be the map from $M$ to one point. We introduce the notations
\eqn
&&\isect(U;\scbul)\eqdot \oim{a^\infty_M}\circ \oim{j_U^\infty}\circ \opb{{j_U^\infty}}\cl \mathrm{Sh}(M, \shd)^\wedge\to\shd,
\eneqn
where $\oim{a^\infty_M}$, $\oim{j_U^\infty}$, $\opb{{j_U^\infty}}$ are the  direct and inverse image functors for $(\infty,1)$-categories of sheaves.  
 If $Z$ is  a closed subset of $U$, using  the cofiber-fiber sequence associated to $\isect(U;\scbul)\to\isect(U\setminus Z;\scbul)$,  we define 
\eqn
&&\isect_Z(U;\scbul)\cl \mathrm{Sh}(M, \shd)^\wedge\to\shd.
\eneqn

The following result generalizes \cite{KS90}*{Prop.~2.7.2} and Theorem \ref{th:272a} to any context of sheaves with stable coefficients:

\begin{theorem}[{The non-characteristic deformation lemma for stable coefficients}]\label{th:272}
Let $M$ be a Hausdorff space 
and let $F\in \mathrm{Sh}(M, \shd)^\wedge$. Let $\{U_s\}_{s\in\R}$ be a family of open subsets of $M$. We assume
\banum
\item
for all $t\in\R$, $U_t=\bigcup_{s<t}U_s$,
\item
for all pairs $(s,t)$ with $s\leq t$, the set $\ol{U_t\setminus U_s}\cap\supp F$ is compact,
\item
setting $Z_s=\bigcap_{t>s}\ol{(U_t\setminus U_s)}$, we have for all pairs $(s,t)$ with $s\leq t$ and all $x\in Z_s$, 
$(\isect_{X\setminus U_t}F)_x\simeq0$. 
\eanum
Then we have the equivalences in $\shd$, for all $s,t\in\R$
\eqn
&&\isect(\bigcup_{s}U_s;F)\isoto\isect(U_t;F).
\eneqn
\end{theorem}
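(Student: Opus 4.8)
The plan is to mimic the proof of Theorem~\ref{th:272a}, replacing the classical tools (flabby resolutions, Mittag-Leffler, Lemma~\ref{le:126a}) with their $\infty$-categorical counterparts (the sheaf/hypercompleteness axioms, and Corollary~\ref{cor:126}). Concretely, define the $\infty$-functor
\eqn
&&X\cl\R^\rop\to\shd,\quad X_s\eqdot\isect(U_s;F),
\eneqn
with transition maps induced by the inclusions $U_r\subset U_s$ for $r\le s$, together with the map $X_\infty\eqdot\isect(\bigcup_s U_s;F)\to X_t$ for each $t$. It suffices to show that $X$ satisfies the hypotheses of Corollary~\ref{cor:126}, because then all the $X_t$ are equivalent; identifying $X_\infty$ with the common value will then follow from hypothesis~(a) and the sheaf condition (the family $\{U_s\}_{s<\infty}$ covers $\bigcup_s U_s$, and $\bigcup_{s<t}U_s=U_t$, so $\isect(\bigcup_s U_s;F)\simeq\plim[t]\isect(U_t;F)$, and a limit over a system with equivalent transition maps indexed by a contractible poset recovers any of its terms).

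The first thing to verify is the limit condition $X_s\isoto\plim[r<s]X_r$. Since $\shd$ is stable and presentable, $\isect(-;F)$ takes colimits of opens to limits; more precisely, by hypothesis~(a) we have $U_s=\bigcup_{r<s}U_r$, and the sheaf-plus-hypercompleteness condition on $F$, applied to this (possibly transfinite, but filtered) cover, gives the desired equivalence $\isect(U_s;F)\isoto\plim[r<s]\isect(U_r;F)$. Here I would want to be slightly careful: the sheaf axiom as stated is for arbitrary covers, and one reduces the \v{C}ech limit over the cover $\{U_r\}_{r<s}$ to the limit over the poset $\R_{<s}$ by a cofinality argument (the poset of finite unions of the $U_r$ is cofinal, and the relevant $\infty$-categorical descent comparison holds since $\shd$ is compactly generated, hence $\isect(-;F)$ commutes with the relevant limits). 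This is exactly the replacement for conditions~\eqref{eq:126a} (flabbiness) and~\eqref{eq:126b} (sheaf property).

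Next, the colimit condition $\colim[t>s]X_t\isoto X_s$. This is the heart of the matter and is proved exactly as part~(i) of the proof of Theorem~\ref{th:272a}, but now at the level of objects of $\shd$ rather than cohomology groups. Replace $M$ by $\supp F$ so that $\ol{U_t\setminus U_s}$ is compact; for $s\le t$ use the cofiber-fiber sequence
\eqn
&&(\isect_{M\setminus U_t}F)\vert_{Z_s}\to(\isect_{M\setminus U_s}F)\vert_{Z_s}\to(\isect_{U_t\setminus U_s}F)\vert_{Z_s}\to,
\eneqn
whose first two terms vanish on $Z_s$ by hypothesis~(c) (the stalk vanishing, since on the compact set $Z_s$ the sections are the colimit of stalks — here one uses properness and that $\isect_{M\setminus U_t}F$ is supported appropriately), so $(\isect_{U_t\setminus U_s}F)\vert_{Z_s}\simeq 0$; then
\eqn
&&0\simeq\isect(Z_s;\isect_{U_t\setminus U_s}F)\simeq\colim[U\supset Z_s]\isect(U\cap U_t;\isect_{M\setminus U_s}F),
\eneqn
and the same open-neighborhood argument as in loc.\ cit.\ (every $U\supset Z_s$ contains $U_{t'}\setminus U_s$ for some $s<t'\le t$) yields $\colim[t>s]\isect(U_t;\isect_{M\setminus U_s}F)\simeq 0$; finally, the cofiber-fiber sequence $\isect_{M\setminus U_s}F\to F\to\isect_{U_s}F$ and the fact that $\isect(U_s;-)\simeq\colim[t>s]\isect(U_t;-)$ on the first term gives $\colim[t>s]\isect(U_t;F)\isoto\isect(U_s;F)$. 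The main obstacle is making the step ``sections over the compact set $Z_s$ = colimit of sections over neighborhoods'' and the vanishing-on-$Z_s$ argument rigorous in the hypercomplete stable setting: in the classical proof this rests on the soft/proper base change machinery of~\cite{KS90}, and here one needs the analogous statement that for $F\in\mathrm{Sh}(M,\shd)^\wedge$ and $Z\subset M$ compact, $\isect(Z;F)\simeq\colim[U\supset Z]\isect(U;F)$ (a consequence of the fact that $\shd$ is compactly generated, so this reduces to mapping spaces and the classical result, applied via the stalk description), together with the identity $(\isect_{M\setminus U_t}F)_x\simeq 0$ propagating from stalks to sections over $Z_s$. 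Once these two geometric inputs are in place, the bookkeeping is formally identical to Theorem~\ref{th:272a}, and the conclusion is obtained by invoking Corollary~\ref{cor:126}.
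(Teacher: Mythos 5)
Your proposal takes essentially the same route as the paper: you establish the limit equivalence $\isect(U_t;F)\isoto\plim[s<t]\isect(U_s;F)$ (the paper gets this from $\cor_{U_t}\simeq\lim$ of the $\cor_{U_s}$ plus right-adjointness of direct image, you from descent for the increasing cover --- the same content), you transport the geometric argument of part (i) of Theorem~\ref{th:272a} to obtain $\colim[t>s]\isect(U_t;F)\isoto\isect(U_s;F)$, and you conclude by Corollary~\ref{cor:126}, exactly as the paper does. The technical points you flag (sections over a compact set as a colimit over open neighborhoods, and stalkwise vanishing implying vanishing in the hypercomplete stable setting) are precisely the ones the paper also leaves implicit by declaring the argument ``formally the same'' as that of~\eqref{eq:isoas}, and your explicit identification of $\isect(\bigcup_s U_s;F)$ with the common value is a step the paper omits.
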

We shall almost mimic the proof of  \cite{KS90}*{Prop.~2.7.2}.
\begin{proof}
(i) We shall  prove the equivalences
\eqn
&&(a)^t\cl \plim[s<t]\isect(U_s;F)\isofrom\isect(U_t;F),\\
&&(b)^s\cl \colim[t>s]\isect(U_t;F)\isoto\isect({U_s};F).
\eneqn

\spa
(ii) Equivalence $(a)^t$ is always true by hypothesis (a). Indeed one has 
$\cor_{U_s}\simeq\prolim[r<s]\cor_{U_r}$,  which implies $\plim[s<t]\isect_{U_s}F\isofrom\isect_{U_t}F$, and the result follows since the direct image functor commutes with $\plim$ (because it is a right adjoint).

\spa
(iii)  The proof of the  equivalence  $(b)^s$ for all $s$ is formally the same as the proof of~\eqref{eq:isoas} which itself mimics that of~\cite{KS90}*{Prop.~2.7.2} and we shall not repeat it. 

\spa
To conclude,
 apply Corollary~\ref{cor:126} to $\shd$.
 \end{proof}
 
\begin{remark}
Let $\cor$ denote a commutative unital ring. Theorem \ref{th:272} recovers  the result of Theorem \ref{th:272a} in the particular case where $\shd$ is the $\infty$-version of the derived category of $\cor$, which we will denote as $\mdinf[\cor]$. We define it as follows: let $\RC(\cor)$ denote the 1-category of (unbounded) chain complexes over $\cor$. One considers the nerve $\mathrm{N}(\RC(\cor))$ and settles $\mdinf[\cor]$ as the localization $\mathrm{N}(\RC(\cor))[\mathcal{W}^{-1}]$ along the class of edges $\mathcal{W}$ given by quasi-isomorphisms of complexes. This localization is taken inside the theory of $(\infty,1)$-categories. See \cite[4.1.3.1]{Lu16}. The homotopy category $\mathrm{h}(\mdinf[\cor])$ is canonically equivalent to $\RD(\cor)$ by the universal properties of the higher and the classical localizations. In this case we settle the notation 
$$\mdinf[\cor_M]:=\mathrm{Sh}(M, \mdinf[\cor])^\wedge.$$
The homotopy category of $\mdinf[\cor_M]$ recovers the usual derived category of (unbounded) complexes of sheaves of $\cor$-modules, $\RD(\cor_M)$. This follows from \cite[Prop.  2.1.8]{Lu11} and the definition of hypercomplete sheaves.  When $M=\rmpt$, one recovers $\mdinf[\cor]$ and $\RD(\cor)$, respectively. 
\end{remark}

\begin{remark} If we assume that $M$ is a topological manifold (therefore homotopy equivalent to a CW-complex), then 
$ \mathrm{Sh}(M, \shd)^\wedge$ is equivalent to $ \mathrm{Sh}(M, \shd)$. In other words, sheaves on topological manifolds are automatically hypercomplete. In particular,  $\mdinf[\cor_M]$ is equivalent  to the higher category $\mathrm{Sh}(M, \mdinf[\cor])$  of $\infty$-sheaves obtained without imposing hyperdescent. To see this we use the fact that every CW-complex can be obtained as a filtered colimit of finite CW-complexes.  Then we combine \cite[7.2.1.12, 7.2.3.6, 7.1.5.8, 6.5.2.13 ]{Lu16}.
\end{remark}

\begin{remark}
In~\cite{KS90}*{Prop.~2.7.2}, $Z_s$ was defined as $Z_s=\ol{\bigcap_{t>s}{(U_t\setminus U_s)}}$, which was a mistake. 
This mistake is already corrected in the  ``Errata'' of: \\
https://webusers.imj-prg.fr/~pierre.schapira/books/.
\end{remark}

\subsection{Micro-support}
\label{section:microsupportstable}
The definition~\cite{KS90}*{Def.~5.1.2} of the micro-support of sheaves immediately extends to  $\infty$-sheaves with stable coefficients.

Let $M$ be a real manifold of class $C^1$ and denote by  $T^*M$ its cotangent bundle.
\begin{definition}\label{def:SS}
 Let $F\in \mathrm{Sh}(M, \shd)$. The micro-support of $F$, denoted $\musupp(F)$, is the closed $\R^+$-conic subset of $T^*M$ defined as follows. 
 For $U$ open in $T^*M$, $U\cap\musupp(F)=\emptyset$ if for any $x_0\in M$ and any
real $C^1$-function $\phi$ on $M$ defined in a neighborhood of $x_0$ 
satisfying $d\phi(x_0)\in U$ and $\phi(x_0)=0$, one has
$(\isect_{\{x;\phi(x)\geq0\}} (F))_{x_0}\simeq0$.
\end{definition}

When $\shd$ is $\mdinf[\cor]$, one recovers the classical definition of the micro-support.
\begin{remark}
As already mentioned in the introduction, Theorem~\ref{th:272a} is the main tool to develop  microlocal sheaf theory in the framework of classical derived categories. We hope that similarly Theorem~\ref{th:272} will be the main tool to develop  microlocal sheaf theory in the  new framework of sheaves with stable coefficients.
\end{remark}

\begin{remark}
In~\cite{KS90}, the micro-support of $F$ was denoted $\SSi(F)$, a shortcut for ``singular support''. 
Some people made the remark that this notation had  very bad historical reminiscences and that is the reason of this new terminology, $\musupp$. 
\end{remark}

%
%
%
%


\providecommand{\bysame}{\leavevmode\hbox to3em{\hrulefill}\thinspace}
\begin{bibdiv}
\begin{biblist}

\bib{GJ09}{book}{
author= {Goerss, Paul G.},
author={ Jardine, John F.},
title = {Simplicial homotopy theory},
series = {Modern Birkh\"auser Classics},
note= {Reprint of the 1999 edition},
publisher = {Birkh\"auser Verlag, Basel},
year = {2009},
pages = {xvi+510},
}

\bib{Gr61} {article}{
author={Grothendieck, Alexander},
title={El\'ements de G\'eom\'etrie Alg\'ebrique III},
journal={Publ. IHES},
volume= {11, 17},
year={1961, 1963}
}

\bib{Ka75}{article}{
author={ Kashiwara, Masaki},
title={On the maximally overdetermined systems of linear differential equations I},
journal={Publ. Res. Inst. Math. Sci.},
volume={10},
date={1975},
pages={563-579}
}

\bib{Ka83}{book}{
author={Kashiwara, Masaki},
title={Systems of microdifferential equations},
series={Progress in Mathematics},
volume={34},
publisher={Birkauser}
date={1983},
pages={xv+159},
}

\bib{KS82} {article}{
author={Kashiwara, Masaki},
author={Schapira, Pierre},
title={Micro-support des faisceaux: applications aux modules diff{\'e}rentiels},
journal={C.~R.~Acad.\ Sci.\ Paris},
volume={295, 8},
pages={487--490},
date={1982}
}

\bib{KS85} {book}{
author={Kashiwara, Masaki},
author={Schapira, Pierre},
title={Microlocal study of sheaves},
 series={Ast{\'e}risque},
 volume={128},
 publisher={Soc.\ Math.\ France},
 date={1985}
 }

\bib{KS90}{book}{
 author={Kashiwara, Masaki},
 author={Schapira, Pierre},
 title={Sheaves on manifolds},
 series={Grundlehren der Mathematischen Wissenschaften},
 volume={292},
 publisher={Springer-Verlag, Berlin},
 date={1990},
 pages={x+512},
}

\bib{KS06} {book}{
 author={Kashiwara, Masaki},
 author={Schapira, Pierre},
 title={Categories and Sheaves},
 series={Grundlehren der Mathematischen Wissenschaften},
 volume={332},
 publisher={Springer-Verlag, Berlin},
 date={2006},
 pages={x+512}
 }
\bib{Lu09} {book}{
author= {Lurie, Jacob},
title = {Higher topos theory},
series = {Annals of Mathematics Studies},
VOLUME = {170},
publisher = {Princeton University Press, Princeton, NJ},
year = {2009},
pages = {xviii+925},
 url= {http://dx.doi.org/10.1515/9781400830558},
}

\bib{DAGV} {article}{
author= {Lurie, Jacob},
title = {DAG V: Structured Spaces},
eprint={http://www.math.harvard.edu/~lurie/papers/DAG-V.pdf},
year = {2011},
}

\bib{Lu11} {article}{
author= {Lurie, Jacob},
title = {DAG VIII: Quasi-Coherent Sheaves and Tannaka Duality Theorems},
eprint={http://www.math.harvard.edu/~lurie/papers/DAG-VIII.pdf},
year = {2011},
}

\bib{Lu16} {article}{
author= {Lurie, Jacob},
title = {Higher Algebra},
eprint={http://www.math.harvard.edu/~lurie/papers/HA.pdf},
year = {2016},
}

\bib{MP12}{book}{
author= {May, J. P.},
author={Ponto, K.},
title = {More concise algebraic topology},
 series = {Chicago Lectures in Mathematics},
note= {Localization, completion, and model categories},
publisher = {University of Chicago Press, Chicago, IL},
 year = {2012},
 pages = {xxviii+514},
}

\bib{Pa16}{article} {
author={Pavlov, Dmitri}
eprint={http://mathoverflow.net/questions/233616/}
date={2016}
}

\bib{YP16}{article} {
author={Porta, Mauro}
author={Yue Yu, Tony}
title={Higher Analytic Stacks and GAGA Theorems}
eprint={arXiv:1412.5166},
date={2016}
}

\bib{SGA4}{article}{
label={SGA4}
author={Artin, Mike}
author={Grothendieck, Alexander}
author={Verdier, Jean-Louis}
title={Th{\'e}orie des topos et cohomologie {\'e}tale des sch{\'e}mas,} 
book={
title={S{\'e}m. G{\'e}om{\'e}trie Alg{\'e}brique (1963--64)}
 series={Lecture Notes in Math.}
  publisher={Springer, Berlin}
 volume={269, 270, 305} 
year={1972/73}}
}

\bib{Ta08}{article} {
author={Tamarkin, Dmitry},
title={Microlocal conditions for non-displaceability},
eprint={arXiv:0809.1584},
date={2008}
}

\bib{Ta15}{article} {
author={Tamarkin, Dmitry},
title={Microlocal category},
eprint={arXiv:1511.08961},
date={2015}
}

\end{biblist}
\end{bibdiv}

\vspace*{1cm}
\noindent
\parbox[t]{16em}
{\scriptsize{
\noindent
Marco Robalo\\
Sorbonne Universit{\'e}s, UPMC Univ Paris 6\\
Institut de Math{\'e}matiques de Jussieu\\
e-mail: marco.roballo@imj-prg.fr\\
http://webusers.imj-prg.fr/\textasciitilde marco.robalo/
}}
\hspace{0.1cm}
\parbox[t]{16em}
{\scriptsize{
Pierre Schapira\\
Sorbonne Universit{\'e}s, UPMC Univ Paris 6\\
Institut de Math{\'e}matiques de Jussieu\\
e-mail: pierre.schapira@imj-prg.fr\\
http://webusers.imj-prg.fr/\textasciitilde pierre.schapira/}}

\end{document}